\newtheorem{theorem}{Theorem}[section]
\newtheorem{proposition}[theorem]{Proposition}
\newtheorem{lemma}[theorem]{Lemma}
\newtheorem{Obs}[theorem]{Observation}
\newcommand{\grn}{G_{r,n}}
\newcommand{\sumlim}{\sum\limits}
\begin{document}

\pagenumbering{arabic}
\pagestyle{headings}
\def\sof{\hfill\rule{2mm}{2mm}}
\def\ls{\leq}
\def\gs{\geq}
\def\SS{\mathcal S}
\def\qq{{\bold q}}
\def\txx{{\frac1{2\sqrt{x}}}}

\title[Colored descents in $\grn$]{Counting descent pairs with prescribed colors in the colored permutation groups}

\author{Eli Bagno}
\address{The Jerusalem College of Technology, Jerusalem, Israel}
\email{bagnoe@jct.ac.il}

\author{David Garber}
\address{Department of Applied Mathematics, Faculty of Sciences, Holon Institute of Technology, PO Box 305,
58102 Holon, Israel} \email{garber@hit.ac.il}

\author{Toufik Mansour}
\address{Department of Mathematics, University of Haifa, 31905 Haifa,
Israel.}
\email{toufik@math.haifa.ac.il}

\begin{abstract} We define new statistics,
{\it $(c,d)$-descents}, on the colored permutation groups
$\mathbb{Z}_r \wr S_n$ and compute the distribution of these
statistics on the elements in these groups. We use some
combinatorial approaches, recurrences, and generating functions
manipulations to obtain our results.
\end{abstract}

\date{\today}

\maketitle

\section{Introduction}

The {\it colored permutation group} is the wreath product
$G_{r,n}=\mathbb{Z}_r \wr S_n=\mathbb{Z}_r^n \rtimes S_n$,
consisting of all the pairs $(z,\tau)$, where $z$ is an $n$-tuple of
integers between $0$ and $r-1$ and $\tau \in S_n$. The
multiplication is defined by the following rule: For
$z=(z_1,\dots,z_n)$ and $z'=(z'_1,\dots,z'_n)$ \[(z,\tau) \cdot
(z',\tau')=((z_1+z'_{\tau^{-1}(1)},\dots,z_n+z'_{\tau^{-1}(n)}),\tau
\circ \tau')\] (here $+$ is taken modulo $r$). We usually write an
element in $G_{r,n}$ as $\pi = \pi_1^{[c_1]} \pi_2 ^{[c_2]} \cdots
\pi_n^{[c_n]}$ where $c_i=z_{\pi^{-1}(i)}$, i.e. $c_i$ is the color
of $\pi(i)$.

The colored permutation group $G_{r,n}$ can also be seen as the set
of all permutations of the set:
$$\Sigma_{r,n}=\{1,\dots,n,\bar{1},\dots,\bar{n},\dots,1^{[r-1]},\dots,n^{[r-1]}\}$$
satisfying $\pi(\bar{i})=\overline{\pi(i)}$. If $\pi_k=i^{[j]}$, we
define $|\pi_k|=i$.

 The classical
Weyl groups appear as special cases:
 the symmetric group $G_{1,n}=S_n$ and the
hyperoctahedral group $G_{2,n}=B_n$. In the last case, the
alphabet $B_n$ is acting on can be written as
$\Sigma=\{\pm1,\dots,\pm n\}$ or as
$\Sigma=\{1,\dots,n,\bar{1},\dots,\bar{n}\}$.

 On the symmetric group $S_n$, a {\it descent pair} of a permutation $\pi \in S_n$ is a pair
 $(i,i+1)$ such that $\pi_i>\pi_{i+1}$. The {\it descent set} of a permutation
 $\pi=\pi_1\pi_2\cdots\pi_n\in S_n$, denoted by ${\rm Des}(\pi)$, is the set
of indices $i$ such that $(i,i+1)$ is a descent pair. The {\it
number of descents} in a permutation $\pi$, denoted by ${\rm
des}(\pi)=|{\rm Des}(\pi)|$, is a classical permutation statistic.
This statistic was first studied by MacMahon \cite{Mac} almost a
century ago, and it still plays an important role in the study of
permutation statistics.

Kitaev and Remmel \cite{KR} counted descents in $S_n$ according to
the parity of the first or second element of the descent pair. In
\cite{KR0},  they generalize it to the case where the first or the
second element in the descent pair is divisible by $k$ for some $k
\geq 2$. Some more research was done in the direction of studying
the corresponding distribution for words \cite{HLR,HR,KMR}.

\medskip

In this paper, we introduce new statistics, {\it positive descent},
{\it negative descent} and {\it pn-descent} on $B_n$, which depend
on the signs of the first or the second element of the descent pair.

We get the following results:

\begin{proposition}\label{thm1}
The number of signed permutations in $B_n$ with exactly $m$
pn-descents is given by $n!\binom{n+1}{n-2m}$.
\end{proposition}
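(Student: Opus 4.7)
The plan is to reduce the count to a statistic on the sign vector alone and then to enumerate binary words by their number of occurrences of the factor $+-$.

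\textbf{Step 1 (Reduction to sign vectors).} Write each $\pi\in B_n$ uniquely as a pair $(\epsilon,\sigma)$ with sign vector $\epsilon=(\epsilon_1,\dots,\epsilon_n)\in\{+,-\}^n$ and underlying unsigned permutation $\sigma=|\pi_1|\cdots|\pi_n|\in S_n$. By case analysis on $(\epsilon_i,\epsilon_{i+1})$, position $i$ is a pn-descent iff $(\epsilon_i,\epsilon_{i+1})=(+,-)$: the configuration $+-$ forces $\pi_i>0>\pi_{i+1}$, so the pair is automatically a descent with one positive and one negative entry; the configuration $-+$ forces $\pi_i<0<\pi_{i+1}$, so it is not a descent; and $++$, $--$ fail the ``one positive, one negative'' requirement. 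Hence the number of pn-descents of $\pi$ depends only on $\epsilon$, not on $\sigma$.

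\textbf{Step 2 (From signed permutations to binary words).} Summing over the $n!$ choices of $\sigma$, the number of $\pi\in B_n$ with exactly $m$ pn-descents equals $n!\,f_n(m)$, where $f_n(m)$ is the number of words in $\{+,-\}^n$ containing exactly $m$ occurrences of the factor $+-$. It therefore suffices to prove
\[
f_n(m)=\binom{n+1}{n-2m}.
\]

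\textbf{Step 3 (Enumerating the words).} Decompose $w\in\{+,-\}^n$ into its maximal monochromatic blocks; then the number of factors $+-$ in $w$ equals the number of $+$-blocks immediately followed by a $-$-block. Splitting into four cases according to the sign of the first and the last block of $w$ and using the fact that compositions of $n$ into $k$ positive parts are counted by $\binom{n-1}{k-1}$, one finds
\[
f_n(m)=\binom{n-1}{2m-1}+2\binom{n-1}{2m}+\binom{n-1}{2m+1}
\]
(with the convention $\binom{n-1}{-1}=0$ covering $m=0$). Two applications of Pascal's rule collapse this to $\binom{n}{2m}+\binom{n}{2m+1}=\binom{n+1}{2m+1}=\binom{n+1}{n-2m}$, which combined with Step~2 proves the proposition.

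\textbf{Main obstacle.} The only conceptual step is the sign-case analysis in Step~1; everything after it is routine binomial bookkeeping. The subtlety there is that the definition of a descent in $B_n$ presupposes a total order on $\Sigma_{2,n}$, but any such order in which every positive entry exceeds every negative entry makes the $+-$ case an automatic descent and the $-+$ case an automatic ascent, which is all we need for the reduction to go through.
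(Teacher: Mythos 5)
Your proposal is correct and follows essentially the same route as the paper: both arguments decompose the sign word into maximal monochromatic blocks, observe that pn-descents occur exactly at the $+\to-$ block transitions, count block structures via compositions of $n$ (giving the three terms $\binom{n-1}{2m-1}+2\binom{n-1}{2m}+\binom{n-1}{2m+1}$, which match the paper's $\binom{n-1}{n-2m}+2\binom{n-1}{n-2m-1}+\binom{n-1}{n-2m-2}$), and finish with Pascal's rule. The only cosmetic difference is that you factor out the $n!$ and work with binary words from the start, whereas the paper carries the $n!$ through a generating function in $q$; the content is identical.
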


\begin{proposition}\label{num_pos_des}
The number of signed permutations in $B_n$ with exactly $m$ positive
(negative) descents is given by
$$\sum_{i\geq0}\sum_{j=0}^i\sum_{k=0}^j(-1)^{i+j+k+n+m}k^{n+j-i}(i-j)!\binom{n}{i-j}\binom{i}{j}\binom{j}{k}\binom{n-i}{m}.$$
\end{proposition}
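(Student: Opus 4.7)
\textbf{Proof plan for Proposition \ref{num_pos_des}.}

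The plan is to carry out a nested inclusion--exclusion argument. First, observe that the involution on $B_n$ which flips the color of every entry negates every signed value and therefore exchanges the set of positive descents with the set of negative descents of each $\pi$. Hence the two statistics are equidistributed and it suffices to prove the claimed count for positive descents. Fix the convention that a \emph{positive descent} of $\pi$ at index $t$ means an index with $\pi_t > \pi_{t+1}$ whose first element $\pi_t$ carries color $0$.

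For a subset $S \subseteq \{1,\dots,n-1\}$ let $A_S$ be the set of $\pi \in B_n$ at which every index of $S$ is a positive descent, and set $N_\ell := \sum_{|S|=\ell}|A_S|$. Standard sieve-theoretic reasoning gives
\begin{equation*}
\#\{\pi \in B_n : \pi \text{ has exactly } m \text{ positive descents}\}
\;=\;\sum_{\ell \geq m}(-1)^{\ell-m}\binom{\ell}{m}N_\ell.
\end{equation*}
Reindexing by $i = n-\ell$ turns the prefactor into $(-1)^{n-i-m}\binom{n-i}{m}$, matching the outer factor $\binom{n-i}{m}$ and accounting for the parity $i+n+m$ in the claim. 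The remaining task is then to identify $N_{n-i}$ with the inner double sum
\begin{equation*}
\sum_{j=0}^{i}\sum_{k=0}^{j}(-1)^{j+k}\,k^{n+j-i}\,(i-j)!\binom{n}{i-j}\binom{i}{j}\binom{j}{k}.
\end{equation*}

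To obtain this inner identity, I would decompose each $\pi\in A_S$ along the maximal consecutive runs of $S$. A run $\{t,t+1,\dots,t+r-1\}\subseteq S$ forces the positions $\pi_t,\dots,\pi_{t+r-1}$ to be uncolored and to form a strictly decreasing block $\pi_t>\pi_{t+1}>\cdots>\pi_{t+r}$, while the color of the right-boundary entry $\pi_{t+r}$ remains free. Splitting the $n$ positions into those lying strictly inside such positive-decreasing blocks and those lying outside yields a two-step enumeration: one first chooses the $i$ ``non-interior'' positions and distributes an alphabet of size $i-j$ among the truly free ones, which contributes the product $(i-j)!\binom{n}{i-j}\binom{i}{j}$; then one enumerates the decreasing content of the locked positions, which amounts to counting surjections from an $(n+j-i)$-element set onto the $j$ block-labels. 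Applying the classical identity $j!\,S(n+j-i,j)=\sum_{k=0}^{j}(-1)^{j-k}\binom{j}{k}k^{n+j-i}$ converts this surjection count into the $k$-sum, up to the sign $(-1)^{j+k}$ that appears in the statement.

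The main obstacle is the careful joint bookkeeping of signs and linear order at the right boundary of each forced run, since the entry $\pi_{t+r}$ may flip sign without breaking the descent inequality and must be treated as a ``free'' rather than a ``locked'' position for the two inclusion--exclusions to telescope correctly. Once this boundary identification is made precise, the outer sieve in $\ell$ and the inner surjection sieve in $k$ combine to give the displayed triple-sum formula; the negative-descent case then follows immediately from the color-flipping involution above.
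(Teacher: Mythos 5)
Your outer sieve is sound in principle: with $A_S$ the set of signed permutations for which every index of $S$ is a positive descent and $N_\ell=\sum_{|S|=\ell}|A_S|$, the formula $\sum_{\ell\ge m}(-1)^{\ell-m}\binom{\ell}{m}N_\ell$ together with the reindexing $i=n-\ell$ does reproduce the factor $(-1)^{i+n+m}\binom{n-i}{m}$, and the required identity $N_{n-i}=\sum_{j=0}^{i}\sum_{k=0}^{j}(-1)^{j+k}k^{n+j-i}(i-j)!\binom{n}{i-j}\binom{i}{j}\binom{j}{k}$ is in fact true (it checks out for $n=1,2$). But there are two concrete gaps. First, you have changed the statistic: the paper defines a positive descent by $\pi_t>\pi_{t+1}>0$, so \emph{both} entries must be positive, whereas your stated convention constrains only the top. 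Under the order $\bar1<\cdots<\bar n<1<\cdots<n$, a positive top with a negative bottom is automatically a descent (a pn-descent), so your $A_S$ is strictly larger than the correct one. Already for $n=2$ your convention gives $N_1=3$ (from $21$, $2\bar1$, $1\bar2$) while the identity above requires $N_1=1$; accordingly, the right boundary $\pi_{t+r}$ of each forced run is \emph{locked} positive rather than free, and the ``main obstacle'' you flag at the run boundaries is an artifact of the wrong definition.

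Second, even with the correct definition, the inner double sum is not the term-by-term run decomposition you describe. Decomposing by the number $j$ of maximal runs gives $N_\ell=\sum_j\binom{n-\ell}{j}\,2^{\,n-\ell-j}\,n!\sum 1/(s_1!\cdots s_j!)$, summed over closed-run sizes $s_t\ge 2$ with $\sum s_t=\ell+j$ (the power of $2$ records the free signs), whereas the $j$-th term of the target identity is $\binom{n-\ell}{j}\frac{n!}{(\ell+j)!}$ times the number of surjections from an $(\ell+j)$-set onto a $j$-set, i.e.\ blocks of size $\ge 1$ and no power of $2$. For $n=2$, $\ell=0$ the run decomposition is the single term $8$ while the formula splits it as $2+4+2$, so the index $j$ in the proposition cannot be the number of runs and a further, unstated resummation would be needed to bridge the two expressions. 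The paper's proof takes an entirely different route: it sets up recurrences for $p_n(q)$, $p_n^{\pm}(q)$ by conditioning on the position of $\pm n$ (Lemma \ref{lem1}), converts them to differential equations for exponential generating functions, solves to obtain $P(x,q)=\frac{1-q}{xq-x-q+e^{(q-1)x}}$, and extracts the coefficient of $x^nq^m$ from a geometric-series expansion; the particular grouping of the triple sum comes from that expansion, not from a block count.
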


\medskip

We generalize the studying of these statistics to the colored
permutation group $G_{r,n}$. We define for each two colors $c\leq d$
the {\it $(c,d)-$descent} and compute the number of elements of
$\grn$ having a fixed number of $(c,d)-$descents.

We get the following results:

\begin{proposition}\label{cddes}
The number of colored permutations in $G_{r,n}$ with exactly $m$
$(c,d)$-descents, $0\leq c<d\leq r-1$, is given by
$$n!\sum_{j=0}^{n-2m}\binom{m+j}{j}\binom{j}{n-2m-j}r^{2j+2m-n}(-1)^{n-j}.$$
\end{proposition}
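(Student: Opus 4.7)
The plan is to exploit the fact that, for $c<d$, whether position $i$ is a $(c,d)$-descent of a colored permutation $\pi\in\grn$ depends only on the pair of colors $(c_i,c_{i+1})$ and not on the underlying values $|\pi_i|,|\pi_{i+1}|$. (This mirrors Proposition \ref{thm1}, where pn-descents are a purely color condition.) Consequently the count factors: the number of $\pi\in\grn$ with exactly $m$ $(c,d)$-descents is $n!$ times the number $a_n(m)$ of color words $(c_1,\dots,c_n)\in\{0,\dots,r-1\}^n$ in which exactly $m$ indices $i$ satisfy $c_i=c$ and $c_{i+1}=d$, so the problem reduces to computing $a_n(m)$.

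To compute $a_n(m)$ I would apply inclusion-exclusion over the set of ``forced'' $(c,d)$-transition positions. For a subset $S\subseteq\{1,\dots,n-1\}$, let $N(S)$ count the color words having $c_i=c$ and $c_{i+1}=d$ for every $i\in S$. Since $c\ne d$, such an $S$ must be independent (no two consecutive indices); then its $k=|S|$ constraints pin down $2k$ distinct entries and leave $n-2k$ free, giving $N(S)=r^{n-2k}$. Since the number of independent $k$-subsets of $\{1,\dots,n-1\}$ is $\binom{n-k}{k}$, the standard exact-count inclusion-exclusion yields
\[
a_n(m)=\sum_{k\geq m}(-1)^{k-m}\binom{k}{m}\binom{n-k}{k}r^{n-2k}.
\]

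The last step is to match this with the claimed sum via the substitution $k=n-m-j$, which reverses the order of summation. Under this change, $(-1)^{k-m}$ becomes $(-1)^{n-j}$, the power $r^{n-2k}$ becomes $r^{2j+2m-n}$, and the product $\binom{k}{m}\binom{n-k}{k}$ becomes $\binom{n-m-j}{m}\binom{m+j}{n-m-j}$, which equals $\binom{m+j}{j}\binom{j}{n-2m-j}$ (both expand to $(m+j)!/(m!\,(n-2m-j)!\,(2j+2m-n)!)$). The main obstacle I expect is actually the first, conceptual step: unpacking the paper's definition of $(c,d)$-descent and confirming that for $c<d$ it is purely a color condition, so that the permutation factor decouples cleanly as $n!$. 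Once that is in hand, both the inclusion-exclusion and the binomial rearrangement are routine.
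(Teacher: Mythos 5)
Your proof is correct, but it takes a genuinely different route from the paper's. Your key conceptual step---that for $c<d$ the $(c,d)$-descent condition is purely a condition on the color word, so the count decouples as $n!$ times a word count---is sound: under the paper's order on $\Sigma_{r,n}$, every letter of color $c$ exceeds every letter of color $d$ when $c<d$, so the descent is automatic once the colors match. Your inclusion-exclusion over independent sets of positions (with $N(S)=r^{n-2k}$ for independent $k$-sets and $0$ otherwise, and $\binom{n-k}{k}$ independent $k$-subsets of $\{1,\dots,n-1\}$), followed by the substitution $k=n-m-j$, does reproduce the stated formula; the binomial identity $\binom{n-m-j}{m}\binom{m+j}{n-m-j}=\binom{m+j}{j}\binom{j}{n-2m-j}$ and the sign and power bookkeeping all check out. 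The paper instead argues in two ways: a block/composition count (yielding a different-looking expression it does not explicitly reconcile with the stated one), and a recursive approach that conditions on the colors of the first one or two letters to obtain $A_{r,n}(q)=rnA_{r,n-1}(q)+(q-1)n(n-1)A_{r,n-2}(q)$, hence the generating function $A_r(x,q)=1/(1-rx-(q-1)x^2)$, from which the coefficient of $x^nq^m$ is extracted. Your sieve is in effect a combinatorial proof of that same generating function, since $\sum_{n,m}a_n(m)q^mx^n=\sum_{k\geq0}(q-1)^kx^{2k}/(1-rx)^{k+1}=1/(1-rx-(q-1)x^2)$; your $N_k$'s are exactly the coefficients in the $(q-1)$-expansion that the paper obtains analytically. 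What your approach buys is a shorter, self-contained derivation that makes the independent-set count transparent; what the paper's recursion buys is a uniform method that also covers the $(c,c)$ case, where the descent condition is no longer purely a color condition and your decoupling fails.
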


\begin{proposition}\label{ccdes}
The number of colored permutations in $G_{r,n}$ with exactly $m$
$(c,c)$-descents, $0\leq c \leq r-1$, is given by
$$n!\sum\limits_{j=0}^{n-m}\sum\limits_{i=0}^j\sum\limits_{k=0}^i\binom{j}{i}\binom{i}{k}\binom{n-j}{m}\frac{(-1)^{n+m+j}(1-r)^{i-k}k^{n-j+i}}{(n-j+i)!}.$$
\end{proposition}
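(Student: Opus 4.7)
The plan is to couple an inclusion-exclusion on the marked $(c,c)$-descent positions with an analysis of the color-$c$ run structure of a colored permutation, and then to extract coefficients from the resulting bivariate exponential generating function.

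\textbf{Step 1.} For $S \subseteq [n-1]$, let $N(S)$ denote the number of $\pi \in G_{r,n}$ such that every $i \in S$ is a $(c,c)$-descent of $\pi$. The standard ``at-least $\Rightarrow$ exactly'' sieve gives
$$E(n,m) = \sum_{k \geq m}(-1)^{k-m}\binom{k}{m}\sum_{|S|=k} N(S),$$
reducing the count to an analysis of $N(S)$. I will show that if $S$ decomposes into $p$ maximal runs of consecutive integers with sizes $a_1,\dots,a_p$, then the $k+p$ positions in $\bigcup_{i\in S}\{i,i+1\}$ must all be colored $c$ with strictly decreasing absolute values inside each run, while the remaining $n-k-p$ positions may take arbitrary values and any of the $r$ colors. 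This yields
$$N(S) = \frac{n!\,r^{n-k-p}}{\prod_{j=1}^{p}(a_j+1)!},$$
and a stars-and-bars argument shows that the number of $S \subseteq [n-1]$ with ordered run sizes $(a_1,\dots,a_p)$ equals $\binom{n-k}{p}$.

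\textbf{Step 2.} Summing over run compositions and using $\sum_{a\geq 1} z^a/(a+1)! = (e^z - 1 - z)/z$ to encode the multiplicities, I will assemble the bivariate EGF
$$F_c(x,y) := \sum_{n\geq 0}\sum_{\pi \in G_{r,n}} y^{\text{cc-des}(\pi)}\frac{x^n}{n!} = \frac{y-1}{y - e^{(y-1)x} - (r-1)x(y-1)}.$$
The same formula can be derived directly by decomposing a colored permutation into alternating color-$c$ and non-color-$c$ runs, combined with the Eulerian EGF $\sum_n A_n(y) x^n/n! = (y-1)/(y-e^{(y-1)x})$ on the color-$c$ blocks.

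\textbf{Step 3.} To extract $E(n,m) = n!\,[y^m x^n]F_c(x,y)$, I will substitute $u = y - 1$ and write
$$F_c = \frac{1}{1-(r-1)x}\sum_{j\geq 0}\frac{(e^{ux}-1)^j}{u^j\,(1-(r-1)x)^j}.$$
Applying the Stirling-number identity $j!\,S(\ell+j,j) = \sum_{i=0}^{j}(-1)^{j-i}\binom{j}{i}i^{\ell+j}$ extracts the coefficient of $u^\ell$; converting $[u^\ell]$ into $[y^m]$ via $[y^m] = \sum_{\ell}(-1)^{\ell-m}\binom{\ell}{m}[u^\ell]$ passes to the $y$-coefficient; and expanding $(1-(r-1)x)^{-(j+1)} = \sum_{a}\binom{a+j}{j}(r-1)^a x^a$ picks up $[x^n]$. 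Relabeling the summation variables then consolidates the result into the triple sum of the proposition.

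\textbf{Main obstacle.} The technical heart will be Step 3: the three indices $j,i,k$ in the stated formula arise from three nested expansions (the inclusion-exclusion on marked descents, the Stirling-number expansion of $(e^{ux}-1)^j$, and the binomial series for $(1-(r-1)x)^{-(j+1)}$), and matching the signs together with the $(1-r)^{i-k}$ factor requires careful bookkeeping. Useful sanity checks will be the specialization $r=1$, where the formula should reduce to the classical Eulerian number $A(n,m)$, and $r=2$, where it should agree with Proposition~\ref{num_pos_des}.
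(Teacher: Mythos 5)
Your Steps 1 and 2 are sound: the sieve computation of $N(S)$, the run--placement count $\binom{n-k}{p}$, and the resulting exponential generating function
$$F_c(x,y)=\frac{y-1}{y-e^{(y-1)x}-(r-1)x(y-1)}$$
are all correct (this series has constant term $1$, reduces to the Eulerian generating function at $r=1$, and yields $g_{r,1}(y)=r$ and $g_{r,2}(y)=2r^2-1+y$, which match direct enumeration). The gap is in Step 3: this series cannot be ``consolidated into the triple sum of the proposition,'' because the stated triple sum is not its coefficient. In fact the proposition fails exactly the sanity checks you propose at the end: for $n=1$, $m=0$ the triple sum evaluates to $2$ for every $r$ (the only nonvanishing terms are $j=1$ with $(i,k)=(0,0)$ and $(1,1)$, using the convention $0^0=1$ forced by the derivation), whereas $G_{r,1}$ has $r$ elements, each with zero $(c,c)$-descents; equivalently, at $r=1$ the formula does not return the Eulerian numbers. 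So the ``careful bookkeeping'' you flag as the main obstacle is not merely delicate --- it is impossible to carry out as stated.

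The discrepancy originates in the paper, not in your decomposition. The paper proves Proposition \ref{ccdes} by a recurrence on the position of the letter of maximal absolute value (the analogue of Lemma \ref{lem1}), solves the resulting differential system, and reports the closed form $G_r(x,q)=\frac{1-q}{(1-x)(1-q)-(r-1)+e^{(q-1)x}}$, from which the triple sum is read off by expanding a geometric series. That closed form agrees with yours only at $r=2$ (where both collapse to $P(x,q)$ of Theorem \ref{gen fun pp}); for general $r$ its constant term is $\frac{1-q}{3-q-r}\neq 1$, and the inner expression of the geometric series then has nonzero constant term, so the expansion used to extract coefficients is not formally legitimate. If you complete Step 3 honestly from your $F_c$ --- Stirling expansion of $(e^{ux}-1)^j$, the negative binomial series for $(1-(r-1)x)^{-(j+1)}$, and the change of basis from $u^{\ell}$ to $y^m$ --- you will obtain a correct explicit sum, but it will be a corrected replacement for Proposition \ref{ccdes}, not a proof of the statement as printed; you should present it as such.
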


\medskip

The paper is organized as follows. In Section \ref{defs}, we define
positive and negative descents on $B_n$ and $(c,d)-$descents on
$G_{r,n}$. In Section \ref{enumerate_bn}, we enumerate the positive,
negative and pn-descents on $B_n$, so we prove Propositions
\ref{thm1} and \ref{num_pos_des}. In Section \ref{enumerate grn}, we
enumerate $(c,d)$-descents in $G_{r,n}$ for $c \leq d$, so we prove
Propositions \ref{cddes} and \ref{ccdes}.

\section{New descent statistics on $B_n$ and $\grn$}\label{defs}

Assume that the alphabet $\Sigma=\{1,\dots, n, \bar{1},\dots
\bar{n}\}$ of $B_n$ is ordered as
$$\bar{1}<\cdots<\bar{n}(<0)< 1<\cdots <n.$$  We say that a
signed permutation $\pi \in B_n$ has a {\it positive} (resp. {\it
negative}) {\it descent} at the index $i$ if $\pi_{i}>\pi_{i+1}>0$
(resp. $0>\pi_{i}>\pi_{i+1}$), and $\pi$ has a {\it pn-descent} if
$\pi_{i}>0>\pi_{i+1}$. The number of positive descents, negative
descents and pn-descents in $\pi$ are denoted by ${\rm
pdes}(\pi)$, ${\rm ndes}(\pi)$ and ${\rm pndes}(\pi)$,
respectively. For example, if $\pi=14\bar{3}75\bar{6}\bar{2}8$,
then ${\rm pdes}(\pi)=1$, ${\rm ndes}(\pi)=1$ and ${\rm
pndes}(\pi)=2$. In Section \ref{enumerate_bn}, we give explicit
formulas for the number of permutations in $B_n$ with exactly $m$
positive descents (resp. negative descents, pn-descents).

\medskip

Fix the following order on $\Sigma_{r,n}$:
$$1^{[r-1]} < 2^{[r-1]} < \cdots < n^{[r-1]} < \cdots < 1^{[1]}< \cdots < n^{[1]} < 1^{[0]} < \cdots < n^{[0]}.$$
We extend our new statistics to the group of colored permutations
$G_{r,n}$:  Let $\pi \in \grn$ and let $c\leq d$. A {\em
$(c,d)-$descent} is a descent in position $i$ such that $\pi(i)$ is
colored by $c$ and $\pi(i+1)$ is colored by $d$. For example, if
$\pi=6^{[1]}2^{[2]}4^{[0]}5^{[1]}3^{[2]}1^{[2]}$, then $\pi$ has two
$(1,2)-$descents:  $i=1$ and $i=4$, a $(0,1)-$descent: $i=3$ and a
$(2,2)-$descent: $i=5$ . The number of $(c,d)-$descents in $\pi$ is
denoted by ${\rm des}_{c,d}(\pi)$. In Section \ref{enumerate grn},
we study the generating function for the number of colored
permutations in $G_{r,n}$ having exactly $m$ $(c,d)-$descents.

\section{Positive, negative and pn-descents in $B_n$}\label{enumerate_bn}

In this section, we find a formula for the number of signed
permutations in $B_n$ with exactly $m$ positive descents (resp.
negative descents, pn-descents). Using the bijection
$\pi_1\pi_2\ldots\pi_n\mapsto(-\pi_1)(-\pi_2)\ldots(-\pi_n)$, we
have that the number of permutations in $B_n$ with exactly $m$
positive descents is equal to the number of permutations in $B_n$
with exactly $m$ negative descents. In the following two
subsections, we enumerate the signed permutations in $B_n$ according
to the number of pn-descents and positive descents.

\subsection{pn-descents}

In this section, we prove Proposition \ref{thm1}.

 Let $\pi =\pi_1 \pi_2 \cdots \pi_n \in
B_n$. A {\it block} in $\pi$ is a subsequence $\pi_i \pi_{i+1}
\cdots \pi_j$ of $\pi$ such that for all $k \in [i,j]$, $\pi_k$
has a common sign. A block is called {\it positive} (resp. {\it
negative}) if $\pi_i>0$ (resp. $\pi_i<0$). The cardinalities of
the blocks form a composition of $n$.  For example, if $\pi = 3 4
\bar{1} \bar{5} 2 7 6 \bar{8}$, then the corresponding blocks of
$\pi$ are $\{3,4\}, \{ \bar{1},\bar{5} \}, \{ 2,7,6 \}, \{
\bar{8}\}$ while the corresponding composition is: $(2,2,3,1)$.
 A simple observation is that the pn-descents appear exactly in the transitions
from a positive block to a negative one. Hence, in order to
enumerate the pn-descents, we have to enumerate these transitions.

We denote the number of blocks in $\pi$ by $b(\pi)$. Given a signed
permutation $\pi$, we have:
$${\rm pndes}(\pi)= \left\{
\begin{array}{lc}
\frac{b(\pi)}{2} & b(\pi)\equiv 0 ({\rm mod}\ 2), \mbox{ first block
is
positive}\\
\frac{b(\pi)}{2}-1 & b(\pi)\equiv 0 ({\rm mod}\ 2), \mbox{ first
block is
negative}\\
\frac{b(\pi)-1}{2} & b(\pi)\equiv 1 ({\rm mod}\ 2) \\
\end{array}\right.$$

Hence, in order to enumerate all the elements of $B_n$ having a
given number of pn-descents, we first have to go over all the
compositions of $n$ into non-trivial parts. Such a composition gives
the structure of the blocks. Note that we have two possibilities
according to the sign of the first block. Then we can fill in the
numbers in $n!$ ways. We denote by ${\rm Comp}(n)$ the set of
compositions of $n$ into non-trivial parts. Thus, we have:
\begin{eqnarray*}
\hspace{-35pt}\sum_{\pi\in B_n} q^{{\rm pndes}(\pi)} & = &
n! \sum_{\tiny \begin{array}{c} \lambda \in {\rm Comp}(n)\\ |\lambda|\ {\rm even}\end{array}} q ^{\frac{|\lambda|}{2} -1}+ n! \sum_{\tiny \begin{array}{c} \lambda \in {\rm Comp}(n)\\ |\lambda|\ {\rm even}\end{array}} q ^{\frac{|\lambda|}{2}} +\\
& & +2 n! \sum_{\tiny \begin{array}{c} \lambda \in {\rm Comp}(n)\\ |\lambda|\ {\rm odd}\end{array}} q ^{\frac{|\lambda|-1}{2}}=  \\
& = & n! \left( \sum_{k=1}^{\lfloor \frac{n}{2} \rfloor} \sum_{\tiny \begin{array}{c} \lambda \in {\rm Comp}(n)\\ |\lambda|=2k \end{array}} q ^k \left( 1+\frac{1}{q} \right)+  2 \sum_{k=0}^{\lfloor \frac{n}{2} \rfloor}\sum_{\tiny \begin{array}{c} \lambda \in {\rm Comp}(n)\\ |\lambda|=2k+1 \end{array}} q ^k \right) = \\
& = & n! \left( \sum_{k=1}^{\lfloor \frac{n}{2} \rfloor} \binom{n-1}{n-2k} q ^k \left( 1+\frac{1}{q} \right)+  2 \sum_{k=0}^{\lfloor \frac{n}{2} \rfloor} \binom{n-1}{n-2k-1} q ^k \right) \\
\end{eqnarray*}

By the Pascal identity
$\binom{a-1}{b}+\binom{a-1}{b-1}=\binom{a}{b}$, we have that the
coefficient of $q^m$, $m \in \{0,\dots,\lfloor{\frac{n}{2}}\rfloor
\}$, is given by
\begin{eqnarray*}
n!\left( \binom{n-1}{n-2m} + 2\binom{n-1}{n-2m-1}+
\binom{n-1}{n-2m-2} \right) & =& n! \binom{n+1}{n-2m},
\end{eqnarray*} as stated in Proposition \ref{thm1}.

\subsection{Positive
and negative descents}

In this section, we find a formula for the number of signed
permutations with exactly $m$ positive descents.

Let
$$p_n(q)=\sumlim_{\pi\in B_n}q^{{\rm pdes}(\pi)}$$ be
the generating function for the positive descents in $B_n$.

In order to find a recurrence for $p_n(q)$,  we define two more
generating functions:
$$p_n^+(q)=\sum\limits_{\pi\in B_n, \ \pi_1>0}q^{{\rm
pdes}(\pi)}$$
$$p_n^-(q)=\sum\limits_{\pi\in B_n, \ \pi_1<0}
q^{{\rm pdes}(\pi)}.$$ Note that $p_n(q)=p_n^+(q)+p_n^-(q)$.

\medskip

Let $\pi \in B_n$. If $|\pi_n|=n$, denote $\pi=\pi'\pi_n$ and we
have $${\rm pdes}(\pi)={\rm pdes}(\pi').$$

Otherwise, let $j<n$ be such that $|\pi_j|=n$. Denote: $\pi=\pi'
\pi_j \pi'' \in B_n$. If $\pi_j=\bar{n}$, then ${\rm
pdes}(\pi)={\rm pdes}(\pi')+{\rm pdes}(\pi'')$. If $\pi_j=n$, we
have two cases depending on the sign of $\pi''_1$ (the first
element of $\pi''$):
$${\rm pdes}(\pi)=\left\{ \begin{array}{lc} {\rm pdes}(\pi')+{\rm pdes}(\pi'') & \pi_1''<0 \\
1+{\rm pdes}(\pi')+{\rm pdes}(\pi'') & \mbox{otherwise}
\end{array}\right.$$
Since there are no restrictions on the positions of the digits\break
$1,2,\dots,n-1$, we have the following recurrences:

\begin{lemma}\label{lem1}
For all $n\geq1$,
\begin{eqnarray*}
p_n(q) &=& 2p_{n-1}(q)+\sum_{j=1}^{n-1}\binom{n-1}{j-1}p_{j-1}(q)p_{n-j}(q)+\\
& &+\sum_{j=1}^{n-1}\binom{n-1}{j-1}p_{j-1}(q)(p_{n-j}^-(q)+qp_{n-j}^+(q)),\\
p_n^+(q) &=& 2p_{n-1}^+(q)+\sum_{j=2}^{n-1}\binom{n-1}{j-1}p_{j-1}^+(q)p_{n-j}(q)+\\
& &
+\sum_{j=1}^{n-1}\binom{n-1}{j-1}p_{j-1}^+(q)(p_{n-j}^-(q)+qp_{n-j}^+(q)),
\end{eqnarray*}

\end{lemma}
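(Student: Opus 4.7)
The plan is to establish both recurrences by the same decomposition: condition on the position $j$ of the entry with largest absolute value, writing $\pi = \pi' \pi_j \pi''$ with $|\pi_j| = n$. For $1 \le j \le n-1$, I would first choose the $j-1$ absolute values appearing in $\pi'$, contributing the factor $\binom{n-1}{j-1}$; after this choice $\pi'$ and $\pi''$ may be regarded as arbitrary signed permutations in $B_{j-1}$ and $B_{n-j}$, since $\mathrm{pdes}$ is invariant under order- and sign-preserving relabeling of the underlying letters. The boundary case $j = n$ is the source of the free $2p_{n-1}(q)$ term: the entry $\pi_n \in \{n,\bar n\}$ sits at the end and creates no positive descent, while the prefix ranges freely over $B_{n-1}$ for each of the two sign choices of $\pi_n$.

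The key observation to use in the main case is that the junction $(\pi_{j-1},\pi_j)$ never contributes a positive descent: when $\pi_j = n$ it is an ascent, and when $\pi_j = \bar n$ the positivity requirement is violated. Hence only the right junction $(\pi_j,\pi_{j+1})$ can create a new positive descent. If $\pi_j = \bar n$ it does not, and the generating-function contribution is $p_{j-1}(q)\,p_{n-j}(q)$. If $\pi_j = n$, a new positive descent appears precisely when $\pi''$ starts with a positive letter, yielding $p_{j-1}(q)\bigl(p_{n-j}^-(q) + q\,p_{n-j}^+(q)\bigr)$. Summing over $1 \le j \le n-1$ and adding the $j = n$ contribution gives the first recurrence exactly as stated.

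For the second recurrence the only change is the constraint $\pi_1 > 0$. When $j \ge 2$ the first letter sits inside $\pi'$, so $p_{j-1}(q)$ is uniformly replaced by $p_{j-1}^+(q)$; when $j = n$ this yields $2p_{n-1}^+(q)$. The subtle point, and the main thing to get right, is the case $j = 1$: the sub-subcase $\pi_1 = \bar n$ is forbidden (this is the reason the $\bar n$-sum begins at $j=2$), while $\pi_1 = n$ is allowed and is absorbed into the $j=1$ term of the third sum under the convention $p_0^+(q) = p_0(q) = 1$. Once this boundary bookkeeping is handled, the decomposition translates verbatim into the claimed identities.
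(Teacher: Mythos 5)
Your proof is correct and takes essentially the same approach as the paper: both condition on the position $j$ of the entry of absolute value $n$, observe that the left junction never yields a positive descent while the right junction yields one exactly when $\pi_j=n$ and $\pi''_1>0$, and use the $\binom{n-1}{j-1}$ choice of values for $\pi'$ together with invariance of ${\rm pdes}$ under standardization. Your version is in fact more explicit than the paper's one-sentence proof, especially in the boundary bookkeeping at $j=1$ and $j=n$.
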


\begin{proof}
In both expressions, the first summand corresponds to the case
$|\pi_n|=n$, the second corresponds to the case $\pi_j=\bar{n}$ for
some $j$, $1 \leq j \leq n-1$, and the third corresponds to the case
$\pi_j=n$ for some $j$, $1 \leq j \leq n-1$.
\end{proof}

 In order to find an explicit formula for $p_n(q)$, we rewrite
the above recurrences in terms of exponential generating functions.
Define:
$$P(x,q)=\sum_{n\geq0}p_n(q)\frac{x^n}{n!},\quad
P^+(x,q)=\sum_{n\geq0}p_n^+(q)\frac{x^n}{n!},$$
$$P^-(x,q)=\sum_{n\geq0}p_n^-(q)\frac{x^n}{n!}.$$

Since $p_n(q)=p_n^+(q)+p_n^-(q)$, we have that
$P(x,q)=P^+(x,q)+P^-(x,q)$. Moreover, for all $n \geq 1$, Lemma
\ref{lem1} gives:
\begin{eqnarray*}
\frac{d}{dx} \left(\frac{p_n(q)}{n!}x^n\right)&=&\frac{2p_{n-1}(q)}{(n-1)!}x^{n-1}+x^{n-1}\sum_{j=0}^{n-2}\frac{p_{j}(q)}{j!}\cdot\frac{p_{n-1-j}(q)}{(n-1-j)!}+\\
& & +x^{n-1}\sum_{j=0}^{n-2}\frac{p_{j}(q)}{j!}\cdot\frac{p_{n-1-j}^-(q)+qp_{n-1-j}^+(q)}{(n-1-j)!},\\
\frac{d}{dx} \left(\frac{p_n^+(q)}{n!}x^n\right)&=&
\frac{p_{n-1}^+(q)}{(n-1)!}x^{n-1}+x^{n-1}\sum_{j=1}^{n-2}\frac{p_{j}^+(q)}{j!}\cdot\frac{p_{n-1-j}(q)}{(n-1-j)!}+\\
& & +x^{n-1}\sum_{j=0}^{n-2}\frac{p_{j}^+(q)}{j!}\cdot\frac{p_{n-1-j}^-(q)+qp_{n-1-j}^+(q)}{(n-1-j)!}.\\
\end{eqnarray*}
Summing over all $n\geq1$ and using the initial conditions
$p_0(q)=p_0^+(q)=1$ and $p_0^-(q)=0$, we get that:
\begin{eqnarray*}
P(x,q)&=&P^+(x,q)+P^-(x,q),\\
\frac{d}{dx}\left(P(x,q)\right)&=&(1-q)P(x,q)+(P(x,q))^2+P(x,q)P^-(x,q)+\\
& & +qP(x,q)P^+(x,q),\\
\frac{d}{dx}\left(P^+(x,q)\right)&=&(1-q)P^+(x,q)-P(x,q)+P^+(x,q)P(x,q)+\\
& & +P^+(x,q)P^-(x,q)+q(P^+(x,q))^2.
\end{eqnarray*}
Using any mathematical programming package, such as Maple or
Mathematica, we obtain the following result.

\begin{theorem}\label{gen fun pp}
The generating functions $P(x,q)$, $P^+(x,q)$ and $P^-(x,q)$ are
given by
$$P(x,q)=\frac{1-q}{xq-x-q+e^{(q-1)x}},$$
$$P^+(x,q)=\frac{(1-x)(1-q)}{xq-x-q+e^{(q-1)x}},$$
$$P^-(x,q)=\frac{x(1-q)}{xq-x-q+e^{(q-1)x}},$$
respectively.
\end{theorem}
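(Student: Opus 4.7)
My plan is to verify directly that the three claimed closed-form expressions satisfy the coupled system of first-order ODEs written down just before the theorem together with the initial conditions $P(0,q)=1$, $P^+(0,q)=1$, $P^-(0,q)=0$ (these come from $p_0(q)=p_0^+(q)=1$, $p_0^-(q)=0$). Because the system is first-order in $x$ with analytic right-hand side, these initial data determine every Taylor coefficient in $x$ recursively, so such a verification is a complete proof; no separate solution technique is required, and it replaces the ``use Maple or Mathematica'' sentence in the text with a hand computation.

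To keep the calculation tractable I would abbreviate $a:=1-q$ and $D:=xq-x-q+e^{(q-1)x}=-ax-q+e^{-ax}$, so that $D(0,q)=1-q=a$ and $D'=-a(1+e^{-ax})$. In this notation $P=a/D$, $P^+=(1-x)a/D$, $P^-=xa/D$. The relation $P=P^++P^-$ is then immediate, and the three initial conditions at $x=0$ reduce to $a/a=1$, $(1-0)a/a=1$, and $0$.

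For the first ODE, differentiation of $P=a/D$ gives $P'=a^2(1+e^{-ax})/D^2$. On the right-hand side, $P^-+qP^+=a(ax+q)/D$, so
\[
aP+P^2+P(P^-+qP^+)=\frac{a^2\bigl[D+1+q+ax\bigr]}{D^2},
\]
and the bracket simplifies to $1+e^{-ax}$ directly from the definition of $D$, matching $P'$. For the second ODE, differentiation yields $(P^+)'=a\bigl[1-(q+ax)e^{-ax}\bigr]/D^2$ after one uses $q+a=1$ and $a-1=-q$. On the right-hand side, the $1/D$ contributions combine to $-a(q+ax)/D$ and the $1/D^2$ contributions combine to $a^2(1-x)\bigl[(2-a)+ax\bigr]/D^2$. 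Placing everything over $D^2$, the $e^{-ax}$-free part of the numerator collapses to $a\bigl[q^2+2a-a^2\bigr]=a$ via the elementary identity $q^2+2a-a^2=(1-a)^2+2a-a^2=1$, while the only surviving $e^{-ax}$ term is $-a(q+ax)e^{-ax}$. This agrees with $(P^+)'$.

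The only real obstacle is bookkeeping in the second verification: five quadratic products must be expanded and only after gathering them over the common denominator $D^2$ do the polynomial pieces telescope. The substitution $a=1-q$ and the two small identities $q+a=1$, $q^2+2a-a^2=1$ are what make those cancellations transparent; without them the manipulation looks far more intimidating than it actually is, which is probably why the paper defers the step to a CAS.
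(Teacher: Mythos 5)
Your proposal is correct and follows essentially the same route as the paper: both start from the ODE system derived just before the theorem, and where the paper delegates the solution to Maple/Mathematica, you instead verify the closed forms directly (your key identities $D+1+q+ax=1+e^{-ax}$ and $(q+ax)^2+a(1-x)(1+q+ax)=1$ both check out) and correctly justify that this suffices via uniqueness of the formal power series solution with the given initial conditions. This is a legitimate, fully explicit replacement for the paper's computer-algebra step rather than a different method.
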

We now calculate:
\begin{eqnarray*}
P(x,q)&=&\frac{1-q}{xq-x-q+e^{(q-1)x}}=\\
&=&\frac{1-q}{(x-1)(q-1)-1 +e^{(q-1)x}}=\\
&=&\frac{1}{1-\left(x-\frac{1-e^{(q-1)x}}{q-1} \right)}=\\
&=& \sum\limits_{i\geq0}\left(x-\frac{1-e^{(q-1)x}}{q-1}\right)^i=\\
&=&\sum_{i\geq0}\sum_{j=0}^i\sum_{k=0}^j\sum_{\ell\geq0}\frac{(-1)^{j+k}k^\ell}{\ell!}\binom{i}{j}\binom{j}{k}x^{i-j+\ell}(q-1)^{\ell-j}.
\end{eqnarray*}
Thus, the number of signed permutations in $B_n$ with exactly $m$
positive descents is $n!$ times the coefficient of $x^n q^m$ in
$P(x,q)$. So we get that this number is:
$$\sum_{i\geq0}\sum_{j=0}^i\sum_{k=0}^j(-1)^{i+j+k+n+m}k^{n+j-i}(i-j)!\binom{n}{i-j}\binom{i}{j}\binom{j}{k}\binom{n-i}{m},$$
as stated in Proposition \ref{num_pos_des}.

\section{$(c,d)$-descents in $G_{r,n}$}\label{enumerate grn}
In this section, we find a formula for the number of colored
permutations in $G_{r,n}$ with exactly $m$ $(c,d)-$descents, where
$c\leq d$. We split our treatment into two cases: $(c,d)-$descents
where $c<d$ and $(c,c)-$descents.

\subsection{$(c,d)$-descents with $c<d$}

We give two approaches to this enumeration, which give different
presentations for the same number. The first is based on a counting
argument and is actually a generalization of the proof of
Proposition \ref{thm1}, while the other uses recurrences and
manipulations of generating functions, as in the proof of
Proposition \ref{num_pos_des}.

\subsubsection{Counting approach}

Let $\pi =\pi_1^{[c_1]} \pi_2^{[c_2]} \cdots \pi_n^{[c_n]} \in
\grn$. Similar to the case of $B_n$, define a $c-$block (resp.
$\bar{c}-$block) in $\pi$ to be a subsequence $\pi_i^{[c_i]}
\pi_{i+1}^{[c_{i+1}]} \cdots \pi_j^{[c_j]}$ of $\pi$ such that for
all $k \in [i,j]$, $c_k=c$  (resp. $c_k\neq c$). As in the case of
$B_n$, the cardinalities of the blocks form a composition of $n$.
For example, in $G_{6,6}$, with $c=2, d=3$, if $\pi = 2^{[2]}
4^{[2]} 5^{[3]} 1^{[4]} 6^{[2]} 3^{[1]}$, then the corresponding
blocks of $\pi$ are $\{2^{[2]},4^{[2]}\}, \{5^{[3]}, 1^{[4]}\},
\{6^{[2]} \}, \{ 3^{[1]}\}$, while the corresponding composition is
$(2,2,1,1)$.

Note that $(c,d)-$descents can appear in the transitions from a
$c$-block to a $\bar{c}-$block, but not necessarily in all of these
transitions. Hence, in order to enumerate the $(c,d)-$descents, we
have to enumerate these transitions in a smart way.

Recall that ${\rm Comp}(n)$ is the set of compositions of $n$. Let
${\rm Comp^{clr}}(n)={\rm Comp}(n) \times \{c,\bar{c}\}$. For each
$\varphi \in {\rm Comp}(n), x \in \{c, \bar{c}\}$, the element
$(\varphi,x) \in {\rm Comp^{clr}}(n)$ represents the composition
$\varphi$ where the first part is colored by $x$.

Now, let $\mu = (\varphi,x) \in {\rm Comp^{clr}}(n)$. We define the
following parameters:

\begin{itemize}
\item $e_\varphi$ is the sum of parts in the even places of $\varphi$.
For example, if $\varphi=(2,3,4,5)$ then $e_\varphi=3+5=8$.

\item $k=k_\mu=\left\{\begin{array}{cc} n-e_\varphi & x=c\\
                                         e_\varphi &
                                         x=\bar{c}.\end{array}\right.$
\item $b=b_\mu$ is the number of $c-$blocks and $\bar{b}_{\mu}$ is
the number of $\bar{c}-$blocks. Explicitly,
 $$b_\mu=\left\{\begin{array}{cc} \frac{|\varphi|}{2} & |\varphi|\ {\rm
 even}\\ & \\
                                         \frac{|\varphi|+1}{2} & |\varphi|\ {\rm odd},  x=c
                                         \\ & \\
                                         \frac{|\varphi|-1}{2} & |\varphi|\ {\rm odd},
                                         x=\bar{c},
                                         \end{array}\right.$$
where $|\varphi|$ is number of parts of $\varphi$, and
$\bar{b}_{\mu}=|\varphi|-b_{\mu}$.
\item $t_\mu$ is the number of transitions between a $c-$block
to a $\bar{c}-$block.

\end{itemize}

\medskip

Define:
$$A_{r,n}(q) = \sumlim_{\pi \in G_{r,n}} q^{{\rm des}_{c,d}(\pi)}$$
For calculating $A_{r,n}(q)$, instead of running over the elements
of $\grn$, we run over the elements of ${\rm Comp^{clr}}(n)$.

\begin{lemma}
For all $n\geq0$,
\begin{equation}\label{comp formula}
A_{r,n}(q)=n! \sumlim_{\mu \in {\rm Comp^{clr}}(n)}
(q+r-2)^{t_\mu}(r-1)^{n-k_\mu-t_\mu}.
\end{equation}
\end{lemma}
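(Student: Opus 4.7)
The plan is to enumerate $\pi\in G_{r,n}$ by first fixing its color shape $\mu\in{\rm Comp^{clr}}(n)$ and then counting the $q$-weighted contribution of those $\pi$ compatible with $\mu$.

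The only conceptual step is an observation that trivializes most of the combinatorics: because the order on $\Sigma_{r,n}$ places every element of color $c$ strictly above every element of color $d$ whenever $c<d$, the condition $\pi_i>\pi_{i+1}$ in the definition of a $(c,d)$-descent becomes automatic once the colors of $\pi_i$ and $\pi_{i+1}$ are, respectively, $c$ and $d$. Hence ${\rm des}_{c,d}(\pi)$ depends only on the color word of $\pi$, not on its underlying values, and a $(c,d)$-descent can appear \emph{only} at a transition from a $c$-block to a $\bar{c}$-block, occurring there iff the first element of that $\bar{c}$-block carries color $d$; no $(c,d)$-descent can occur inside a single block or at a $\bar{c}\to c$ transition.

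With this in hand, I would fix $\mu$ and build a compatible $\pi$ by independent choices. The underlying values $|\pi_1|,\ldots,|\pi_n|$ form an arbitrary permutation of $\{1,\ldots,n\}$ and contribute $n!$. Inside the $c$-blocks the colors are forced, contributing $1$. Inside a $\bar{c}$-block, each position takes any color in $\{0,\ldots,r-1\}\setminus\{c\}$; the $t_\mu$ positions that are the \emph{first} element of a $\bar{c}$-block preceded by a $c$-block each contribute $q$ when colored $d$ and $1$ for any of the remaining $r-2$ allowed colors, for total weight $q+r-2$. All other positions inside $\bar{c}$-blocks cannot create a $(c,d)$-descent and contribute $r-1$ each. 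Since there are $n-k_\mu$ positions inside $\bar{c}$-blocks in total, the number that contribute the plain $r-1$ is $n-k_\mu-t_\mu$, yielding $n!\,(q+r-2)^{t_\mu}(r-1)^{n-k_\mu-t_\mu}$ for shape $\mu$.

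The only pitfall is the small case analysis needed to confirm that $t_\mu$ really equals the number of $\bar{c}$-blocks preceded by a $c$-block: this agrees with $\bar{b}_\mu$ when $x=c$, but equals $\bar{b}_\mu-1$ when $x=\bar{c}$, since in the latter case the leading $\bar{c}$-block has no preceding $c$-block and cannot host a $(c,d)$-descent. Once this bookkeeping is in place, summing over $\mu\in{\rm Comp^{clr}}(n)$ yields the stated formula.
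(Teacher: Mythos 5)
Your proof is correct and follows essentially the same route as the paper: group the colored permutations by their block shape $\mu$, observe that a $(c,d)$-descent occurs exactly at a $c$-block-to-$\bar{c}$-block transition whose first digit is colored $d$, and compute the weighted color choices as $(q+r-2)^{t_\mu}(r-1)^{n-k_\mu-t_\mu}$. You in fact supply a detail the paper leaves implicit — that the chosen order on $\Sigma_{r,n}$ makes the inequality $\pi_i>\pi_{i+1}$ automatic once the colors are $c$ and $d$ with $c<d$, so the statistic depends only on the color word — which is exactly what justifies pulling out the factor $n!$.
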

\begin{proof}
Each $\mu \in {\rm Comp^{clr}}(n)$ gives rise to $n!
(r-1)^{n-k_{\mu}}$ elements of $\grn$. $\pi\in \grn$ contributes a
$(c,d)-$descent in each transition from a $c-$block to a
$\bar{c}-$block such that the first digit of the $\bar{c}-$block is
colored by $d$.
\end{proof}

In order to get an explicit expression for Formula (\ref{comp
formula}), we treat separately ${\rm Comp}(n) \times \{c\}$ and
${\rm Comp}(n) \times \{\bar{c}\}$.

\medskip

Let $\mu=(\varphi,c) \in {\rm Comp}(n) \times \{c\}$.

We split the contribution into three cases according to $|\varphi|$.
\begin{enumerate}
\item $|\varphi|=1$:
 In this case we have $k_{\mu}=n$ and $t_{\mu}=0$, thus its
 contribution is:

$$n!\sumlim_{\tiny \begin{array}{c} \mu=(\varphi,c) \in {\rm Comp^{clr}}(n) \\|\varphi|=1 \end{array}}
(q+r-2)^{t_\mu}(r-1)^{n-k_\mu-t_\mu} = n!.$$

\item $|\varphi|$ even: In this case, $k_{\mu}$ can vary in
$\{1,\dots,n-1\}$ and hence  $b_{\mu} \in \{1,\dots ,k_{\mu}\}$.
Moreover, $\bar{b}_{\mu}=t_{\mu}=b_{\mu}$. Then we have that the
contribution is:
$$n!\sumlim_{\tiny \begin{array}{c} \mu=(\varphi,c) \in {\rm Comp^{clr}}(n) \\|\varphi| \mbox{ even} \end{array}}
(q+r-2)^{t_\mu}(r-1)^{n-k_\mu-t_\mu} = $$ $$=n! \sumlim_{k=1}^{n-1}
\sumlim_{b=1}^{k} {k-1 \choose k-b}{n-k-1 \choose n-k-b} (q+r-2)^b
(r-1)^{n-k-b}.$$

Note that the first binomial coefficient corresponds to the choice
of dividing the $k$ digits colored by $c$ into $b$ non-empty
blocks, while the second binomial coefficient corresponds to the
choice of dividing the remaining $n-k$ digits (which are not
colored by $c$) into $\bar{b}$ non-empty blocks.

\item $|\varphi|>1$ odd: In this case, $k_{\mu}$ can vary in
$\{2,\dots,n-1\}$ and hence $b_{\mu} \in \{2,\dots ,k_{\mu}\}$.
Moreover, $\bar{b}_{\mu}=b-1,t_{\mu}=b_{\mu}-1$. Then the
contribution is:
$$n!\sumlim_{\tiny \begin{array}{c} \mu=(\varphi,c) \in {\rm Comp^{clr}}(n) \\|\varphi| \mbox{ odd} \end{array}}
(q+r-2)^{t_\mu}(r-1)^{n-k_\mu-t_\mu} = $$ $$=n!
\sumlim_{k=2}^{n-1} \sumlim_{b=1}^{k} {k-1 \choose k-b}{n-k-1
\choose n-k-b+1} (q+r-2)^{b-1} (r-1)^{n-k-b+1}.$$
\end{enumerate}

\medskip

Now, let us treat the set ${\rm Comp}(n) \times \{\bar{c}\}$. Let
$\mu=(\varphi,\bar{c}) \in {\rm Comp}(n) \times \{\bar{c}\}$.

As in the case of ${\rm Comp}(n) \times \{c\}$, we split the
computation into three cases according to $|\varphi|$:
\begin{enumerate}
\item $|\varphi|=1$:
$$n!\sumlim_{\tiny \begin{array}{c} \mu=(\varphi,\bar{c}) \in {\rm Comp^{clr}}(n) \\|\varphi|=1 \end{array}}
(q+r-2)^{t_\mu}(r-1)^{n-k_\mu-t_\mu} = n!(r-1)^n.$$

\item $|\varphi|$ even: In this case, $k_{\mu}\in \{1,\dots,n-1\}$ and hence  $b_{\mu} \in \{1,\dots
,k_{\mu}\}$. Moreover, $\bar{b}_{\mu}=t_{\mu}=b_{\mu}-1$. Then we
have:
$$n!\sumlim_{\tiny \begin{array}{c} \mu=(\varphi,\bar{c}) \in {\rm Comp^{clr}}(n) \\|\varphi| \mbox{ even} \end{array}}
(q+r-2)^{t_\mu}(r-1)^{n-k_\mu-t_\mu} = $$ $$=n!
\sumlim_{k=1}^{n-1} \sumlim_{b=1}^{k} {k-1 \choose k-b}{n-k-1
\choose n-k-b} (q+r-2)^{b-1} (r-1)^{n-k-b+1}.$$

\item $|\varphi|>1$ odd: In this case, $k_{\mu}$ can vary in
$\{1,\dots,n-2\}$ and hence $b_{\mu} \in \{2,\dots ,k_{\mu}\}$.
Moreover, $\bar{b}_{\mu}=b_{\mu}+1,t_{\mu}=b_{\mu}$. Then we have:
$$n!\sumlim_{\tiny \begin{array}{c} \mu=(\varphi,{\bar{c}}) \in {\rm Comp^{clr}}(n) \\|\varphi| \mbox{ odd} \end{array}}
(q+r-2)^{t_\mu}(r-1)^{n-k_\mu-t_\mu} = $$ $$=n!
\sumlim_{k=1}^{n-2} \sumlim_{b=1}^{k} {k-1 \choose k-b}{n-k-1
\choose n-k-b-1} (q+r-2)^{b} (r-1)^{n-k-b}.$$

\end{enumerate}

Summing up all the ingredients, we get:

\begin{eqnarray*}
 A_{r,n}(q)& =&n!(1+(r-1)^n)+ \\
& & \quad +n!\sumlim_{k=1}^{n-1} \sumlim_{b=1}^{k} {k-1 \choose
k-b}{n-k \choose n-k-b} (q+r-2)^{b}
(r-1)^{n-k-b}+\\
& & \quad +n!\sumlim_{k=1}^{n-1} \sumlim_{b=1}^{k} {k-1 \choose
k-b}{n-k \choose n-k-b+1} (q+r-2)^{b-1} (r-1)^{n-k-b+1}= \\
&=& n!\left(\frac{1-(r-1)^{n+1}}{2-r}\right)+ \\
& & \quad +n!\sumlim_{k=1}^{n-1} \sumlim_{b=1}^{k} {k \choose
b}{n-k \choose b} (q+r-2)^{b} (r-1)^{n-k-b}.
\end{eqnarray*}

Hence, we get the following result:
\begin{proposition}
The number of colored permutations in $G_{r,n}$ with exactly $0$
$(c,d)$-descents, $0\leq c<d\leq r-1$, is:
$$n!\left(\frac{1-(r-1)^{n+1}}{2-r}\right)+n!\sumlim_{k=1}^{n-1} \sumlim_{b=1}^{k} {k \choose b}{n-k
\choose b} (r-2)^{b} (r-1)^{n-k-b}.$$

The number of colored permutations in $G_{r,n}$ with exactly $m>0$
$(c,d)$-descents, $0\leq c<d\leq r-1$, is given by:
$$n!\sumlim_{k=1}^{n-1} \sumlim_{b=1}^{k} {k \choose b}{n-k \choose
b} {b \choose m} (r-2)^{b-m} (r-1)^{n-k-b}.$$
\end{proposition}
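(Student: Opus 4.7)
The plan is to read off the coefficient of $q^m$ from the closed form of $A_{r,n}(q)$ established immediately above, namely
$$A_{r,n}(q) \;=\; n!\left(\frac{1-(r-1)^{n+1}}{2-r}\right) \;+\; n!\sumlim_{k=1}^{n-1}\sumlim_{b=1}^{k}\binom{k}{b}\binom{n-k}{b}(q+r-2)^{b}(r-1)^{n-k-b}.$$
By the very definition $A_{r,n}(q)=\sumlim_{\pi\in\grn}q^{{\rm des}_{c,d}(\pi)}$, the number of colored permutations in $\grn$ with exactly $m$ $(c,d)$-descents equals $[q^m]A_{r,n}(q)$.

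For the case $m=0$ I would simply evaluate $A_{r,n}(0)$. The first summand is independent of $q$ and survives unchanged, while each factor $(q+r-2)^b$ collapses to $(r-2)^b$. This reproduces the first stated expression on the nose, with no further manipulation required.

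For $m>0$ the first summand is a constant in $q$ and contributes nothing, so only the double sum matters. Expanding
$$(q+r-2)^{b} \;=\; \sumlim_{j=0}^{b}\binom{b}{j}(r-2)^{b-j}q^{j}$$
by the binomial theorem and picking off the coefficient of $q^m$ yields $\binom{b}{m}(r-2)^{b-m}$, with the standard convention $\binom{b}{m}=0$ whenever $b<m$. Substituting this inside the double sum over $k$ and $b$ produces the second stated formula verbatim.

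The whole content of the proposition is thus a routine coefficient extraction from the generating function $A_{r,n}(q)$. There is essentially no obstacle to overcome: the genuine combinatorial work, including the enumeration over $\mathrm{Comp^{clr}}(n)$, the bookkeeping of $c$-blocks and $\bar c$-blocks, and the identification of which block transitions can carry a $(c,d)$-descent, has already been carried out in the derivation of $A_{r,n}(q)$ in the preceding lemma, so the remaining step is purely algebraic.
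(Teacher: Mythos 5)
Your proposal is correct and matches the paper's (implicit) proof exactly: the proposition is stated as an immediate consequence of the closed form for $A_{r,n}(q)$ derived just above it, and the intended argument is precisely the coefficient extraction you perform — evaluating at $q=0$ for $m=0$ and expanding $(q+r-2)^b$ by the binomial theorem to read off $\binom{b}{m}(r-2)^{b-m}$ for $m>0$. No gaps.
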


The numbers given in the last proposition turn out to be the same as
in Proposition \ref{cddes}.

\subsubsection{Recursive approach}
In this section, we find an explicit formula for $A_{r,n}(q)$ by
using a recursive approach.

Recall that
\[A_{r,n}(q)=\sumlim_{\pi\in G_{r,n}}q^{{\rm
des}_{c,d}(\pi)},\] and define:

$$A_{r,n}(q;c_1)=\sum_{\pi \in G_{r,n}, \
c_1(\pi)=c_1}q^{{\rm des}_{c,d}(\pi)}$$ and

$$A_{r,n}(q;c_1,c_2)=\sum_{\pi \in G_{r,n}, \
c_1(\pi)=c_1,c_2(\pi)=c_2}q^{{\rm des}_{c,d}(\pi)}.$$

By the definitions, we have
\begin{equation}\label{eqcc1}
A_{r,n}(q)= \sum_{j=0}^{r-1}A_{r,n}(q;j).
\end{equation}

Define the following two maps:
\begin{itemize}
\item For $\pi \in \grn$, define $\pi' \in G_{r,n-1}$ by
$$\pi' (i)=\left\{
\begin{array}{ccc}
\pi (i+1)    & &   \pi(i+1) < \pi(1) \\
\pi (i+1)-1  & &   \pi(i+1) > \pi(1) \\
\end{array}\right.$$
The idea for defining $\pi'$ is as follows: Write $\pi$ in its
complete notation, i.e. as a matrix of two rows. The first row of
$\pi'$ is $(1,2,\dots,n-1)$, while the second row is obtained from
the second row of $\pi$ by ignoring the digit $\pi(1)$ and the other
digits are placed in an order preserving way with respect to the
second row of $\pi$.

\item For $\pi \in \grn$, define $\pi'' \in G_{r,n-2}$ by
$$\pi'' (i)=\left\{
\begin{array}{ccc}
\pi (i+2)    & &   \pi(i+2) < \min\{\pi(1),\pi(2)\} \\
\pi (i+2)-1  & &   \min\{\pi(1),\pi(2)\} <\pi(i+2) < \max\{\pi(1),\pi(2)\} \\
\pi (i+2)-2  & &   \pi(i+2) > \max\{\pi(1),\pi(2)\} \\
\end{array}\right.$$
$\pi''$ differs from $\pi'$ only in the fact that the first row of
$\pi''$ is $(1,2,\dots,n-2)$ and in the second row we ignore the
digits $\pi(1)$ and $\pi(2)$.
\end{itemize}

\medskip

Let $\pi\in \grn$ be such that $c_1(\pi)=j$ with $j\neq c$. Since
each $(c,d)-$descent starts by an element colored by $c$, we have
${\rm des}_{c,d}(\pi)={\rm des}_{c,d}(\pi')$. Therefore,
$A_{r,n}(q;j)=nA_{r,n-1}(q)$ for each $j\neq c$. Hence, Equation
\eqref{eqcc1} gives
\begin{equation}\label{eqcc2}
A_{r,n}(q)=\sum_{j=0}^{r-1}
A_{r,n}(q;j)=(r-1)nA_{r,n-1}(q)+A_{r,n}(q;c).
\end{equation}
Again, by the definitions, we have for all $1 \leq i \leq n$:
\begin{eqnarray}\label{eqcc3}
A_{r,n}(q;c)&=& \sum_{\tiny\begin{array}{c}s=0\\s\neq c,d
\end{array}}^{r-1}A_{r,n}(q;c,s)+A_{r,n}(q;c,c)+A_{r,n}(q;c,d).
\end{eqnarray}

Now, let $\pi \in \grn$ be such that $c_1(\pi)=c,\ c_2(\pi)=s$.

\medskip

Then we have:
$${\rm des}_{c,d}(\pi)=\left\{ \begin{array}{cc}
                                            {\rm des}_{c,d}(\pi'') & s
                                            \neq c,d\\
                                            {\rm des}_{c,d}(\pi') &
                                            s =c \\
                                            {\rm des}_{c,d}(\pi'')+1 &
                                            s=d.\\
\end{array}\right.$$

Thus from Equation \eqref{eqcc3}, we get:
\begin{eqnarray*}
A_{r,n}(q;c)&=&(r-2)n(n-1)A_{r,n-2}(q)+nA_{r,n-1}(q;c)+ \\
& &+qn(n-1)A_{r,n-2}(q)=\\
&=&(q+(r-2))n(n-1)A_{r,n-2}(q)+nA_{r,n-1}(q;c).
\end{eqnarray*}

By substituting $A_{r,n}(q;c)$ from Equation \eqref{eqcc2} twice,
we obtain the following recurrence:
    $$A_{r,n}(q)-(r-1)nA_{r,n-1}(q)=nA_{r,n-1}(q)+(q-1)n(n-1)A_{r,n-2}(q)$$
which is equivalent to:
    $$A_{r,n}(q)=rnA_{r,n-1}(q)+(q-1)n(n-1)A_{r,n-2}(q),$$
for all $n\geq1$.

\medskip

In order to get an explicit formula for the number of colored
permutations in $G_{r,n}$ with exactly $m$ $(c,d)$-descents, we
rewrite the above recurrence relation in terms of generating
functions. Define:
$$A_r(x,q)=\sum_{n\geq0} A_{r,n}(q)\frac{x^n}{n!}.$$
By the above recurrence, and using the initial condition
$A_{r,0}(q)=1$, we get that:
    $$A_r(x,q)=1+rxA_r(x,q)+(q-1)x^2A_r(x,q),$$
which is equivalent to:
\begin{eqnarray*}
A_r(x,q)&=&\frac{1}{1-rx-(q-1)x^2}=
\frac{\frac{1}{1-rx+x^2}}{1-\left( \frac{qx^2}{1-rx+x^2} \right)}=\\
&=&\sum_{t\geq0}\frac{x^{2t}}{(1-rx+x^2)^{t+1}}q^t =
\sum_{t\geq0}\frac{x^{2t}}{(1-(x(r-x)))^{t+1}}q^t,
\end{eqnarray*}
which implies:
\begin{eqnarray*}
A_r(x,q)& =&
\sum_{t\geq0}\sum_{j\geq0}\binom{t+j}{j}(r-x)^jx^{2t+j}q^t=\\
&=&\sum_{t\geq0}\sum_{j\geq0}\sum_{i=0}^j\binom{t+j}{j}\binom{j}{i}
r^{j-i} (-1)^i x^{2t+j+i} q^t.
\end{eqnarray*}
Hence, the coefficient of $x^nq^m$ in $A_r(x,q)$ is given by:
$$\sum_{j=0}^{n-2m}\binom{m+j}{j}\binom{j}{n-2m-j}r^{2j+2m-n}(-1)^{n-j},$$
as stated in Proposition \ref{cddes}.

\medskip

Substituting $r=2$ in Proposition \ref{cddes}, and comparing with
Proposition \ref{thm1}, we obtain the following identity:
$$\sum_{j=0}^{n-2m}\binom{m+j}{j}\binom{j}{n-2m-j}2^{2j+2m-n}(-1)^{n-j}=\binom{n+1}{n-2m}.$$

\subsection{$(c,c)$-descents}
Let $\pi=i_1^{[j_1]}\cdots i_n^{[ j_n]} \in \grn$. Using a map $f:
\grn \rightarrow \grn$ which takes each colored digit $i^{[j]}$ to
$i^{[(j+1) \pmod r]}$ we obtain the following :

\begin{Obs}\label{gl2}
For each color $c \in \{0,\dots,r-1\}$, the number of colored
permutations with exactly $m$ $(c,c)$-descents is equal to the
number of colored permutations with exactly $m$ $(0,0)$-descents.
\end{Obs}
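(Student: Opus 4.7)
The plan is to show that the map $f$ suggested in the observation is a bijection of $G_{r,n}$ which sends $(c,c)$-descents to $(c+1,c+1)$-descents (with colors taken modulo $r$), and then iterate to reach color $0$. In the $(z,\tau)$ notation of the introduction, $f$ acts as $f(z,\tau)=(z+(1,\dots,1),\tau)$ with addition modulo $r$, leaving the underlying permutation in $S_n$ unchanged. The map is manifestly a bijection, its inverse being subtraction of $1$ from every color coordinate modulo $r$.

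Next I would verify the following key property: position $i$ is a $(c,c)$-descent of $\pi$ if and only if position $i$ is a $(c+1,c+1)$-descent of $f(\pi)$ (modulo $r$). Suppose $\pi_i=a^{[c]}$ and $\pi_{i+1}=b^{[c]}$. Then by the definition of $f$ we have $f(\pi)_i=a^{[c+1]}$ and $f(\pi)_{i+1}=b^{[c+1]}$, so the colors match up to a uniform shift. The only nontrivial point is that the descent condition is preserved, i.e., that $a^{[c]}>b^{[c]}$ in the ordering on $\Sigma_{r,n}$ if and only if $a^{[c+1]}>b^{[c+1]}$. This is immediate from the displayed order in Section \ref{defs}: restricted to any single color class $\{1^{[j]},\dots,n^{[j]}\}$ it coincides with the natural order on $\{1,\dots,n\}$ independently of $j$, so both conditions reduce to $a>b$.

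Iterating $f$ then shows that the number of elements of $G_{r,n}$ with exactly $m$ $(c,c)$-descents equals the number with exactly $m$ $(c+1,c+1)$-descents for every color $c$. Applying $f$ a total of $r-c$ times (or any number congruent to $-c$ modulo $r$) therefore yields the claimed equidistribution with the $(0,0)$-case. The only subtlety worth guarding against is the compatibility of the ordering on $\Sigma_{r,n}$ with the uniform color shift, but this is essentially built into the definition of the order, so there is no substantive obstacle; the author has supplied exactly the right bijection.
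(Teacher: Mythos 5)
Your proof is correct and follows exactly the paper's approach: the paper simply introduces the color-shift map $f: i^{[j]} \mapsto i^{[(j+1) \bmod r]}$ and asserts the observation, while you supply the (routine but worthwhile) verification that $f$ is a bijection carrying $(c,c)$-descents to $(c+1,c+1)$-descents because the order on $\Sigma_{r,n}$ restricted to any single color class is the natural order on $\{1,\dots,n\}$. Nothing is missing; iterating $f$ as you describe completes the argument.
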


By the above observation, it suffices to find the generating
function for the number of colored permutations in $G_{r,n}$ having
$(0,0)$-descents.

\medskip

Let
$$g_{r,n}(q)= \sum_{\pi\in G_{r,n}}q^{{\rm des}_{0,0}(\pi)}.$$
In order to find a recurrence  for $g_{r,n}(q)$, we will use the
following notations: Define
$$g_{r,n}^+(q)=\sum\limits_{\pi\in G_{r,n}, \ c_1(\pi)=0}q^{{\rm des}_{0,0}(\pi)},$$
$$ g_{r,n}^-(q)=\sum\limits_{\pi\in G_{r,n}, \ c_1(\pi) \neq 0}q^{{\rm des}_{0,0}(\pi)}.$$
Using similar arguments as in the proof of Lemma \ref{lem1}, we get
the following lemma.

\begin{lemma}\label{gl3}
For all $n\geq1$,
\begin{eqnarray*}
g_{r,n}(q)&=&rg_{r,n-1}(q)+(r-1)\sum\limits_{j=1}^{n-1}\binom{n-1}{j-1}g_{r,j-1}(q)g_{r,n-j}(q)+\\
& & +\sum\limits_{j=1}^{n-1}\binom{n-1}{j-1}g_{r,j-1}(q)(g_{r,n-j}^-(q)+qg_{r,n-j}^+(q)),\\
g_{r,n}^+(q)&=&rg_{r,n-1}^+(q)+(r-1)\sum\limits_{j=2}^{n-1}\binom{n-1}{j-1}g_{r,j-1}^+(q)g_{r,n-j}(q)+\\
& &
+\sum\limits_{j=1}^{n-1}\binom{n-1}{j-1}g_{r,j-1}^+(q)(g_{r,n-j}^-(q)+qg_{r,n-j}^+(q)),\\
g_{r,n}(q)&=&g_{r,n}^+(q)+g_{r,n}^-(q).
\end{eqnarray*}
\end{lemma}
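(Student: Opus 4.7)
The plan is to adapt the argument for Lemma~\ref{lem1} almost line for line, replacing the sign dichotomy on $B_n$ by the dichotomy ``colored $0$ vs.\ not colored $0$'' on $G_{r,n}$. Given $\pi\in G_{r,n}$, I would condition on the position $j$ of the unique entry of absolute value $n$ together with its color. The essential observation is that $n^{[0]}$ is the maximum of $\Sigma_{r,n}$ in the fixed order, so it can only appear as the larger entry of a descent; and any $n^{[c]}$ with $c\neq 0$ is not colored $0$, hence cannot participate in a $(0,0)$-descent on either side.

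This gives three cases. If $j=n$, then $\pi_n$ is neither the top nor the bottom of any $(0,0)$-descent, so ${\rm des}_{0,0}(\pi)={\rm des}_{0,0}(\pi')$ with $\pi'\in G_{r,n-1}$, and the $r$ color choices for $\pi_n$ yield the leading term $r\,g_{r,n-1}(q)$. If $j<n$, write $\pi=\pi'\pi_j\pi''$ with $\pi'$ of length $j-1$ and $\pi''$ of length $n-j$, regarded as colored permutations of their respective supports after an order-preserving relabeling of values. When $\pi_j=n^{[c]}$ with $c\neq 0$, the middle entry is invisible to ${\rm des}_{0,0}$, so the statistic splits additively as ${\rm des}_{0,0}(\pi)={\rm des}_{0,0}(\pi')+{\rm des}_{0,0}(\pi'')$, producing the $(r-1)$-weighted middle sum. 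When $\pi_j=n^{[0]}$, position $j-1$ is never a descent (nothing exceeds $n^{[0]}$) while position $j$ contributes a $(0,0)$-descent precisely when $c_1(\pi'')=0$; this is exactly the effect of splitting $g_{r,n-j}(q)=g_{r,n-j}^-(q)+g_{r,n-j}^+(q)$ and weighting the $+$ piece by $q$. Multiplying by the $\binom{n-1}{j-1}$ ways to distribute the remaining absolute values between $\pi'$ and $\pi''$ and summing over the three cases produces the first recurrence.

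The second recurrence follows by restricting the same case analysis to the subfamily $\{\pi\in G_{r,n}:c_1(\pi)=0\}$. For $j=n$ and for $j\geq 2$, the constraint $c_1(\pi)=0$ is equivalent to $c_1(\pi')=0$, so $g_{r,\cdot}(q)$ is simply replaced by $g_{r,\cdot}^+(q)$ in those contributions; when $\pi_j=n^{[c]}$ with $c\neq 0$ the value $j=1$ is incompatible with $c_1(\pi)=0$ and must be excluded, which is precisely why the middle sum now starts at $j=2$, whereas for $\pi_j=n^{[0]}$ the value $j=1$ is allowed and corresponds to $\pi'$ empty, absorbed by the convention $g_{r,0}^+(q)=1$. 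The third identity $g_{r,n}(q)=g_{r,n}^+(q)+g_{r,n}^-(q)$ is immediate from the definitions. The main obstacle I anticipate is purely bookkeeping: keeping track of which boundary values of $j$ survive in each of the three sums and confirming that the only asymmetry between the two recurrences is the omission of $j=1$ in the middle sum for $g_{r,n}^+(q)$.
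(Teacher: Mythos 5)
Your proposal is correct and is exactly the argument the paper intends: the paper's own proof consists of invoking ``similar arguments as in the proof of Lemma~\ref{lem1}'', i.e.\ conditioning on the position and color of the entry of absolute value $n$ and splitting into the three cases you describe, with the color dichotomy $0$ versus non-$0$ replacing the sign dichotomy. (The only blemish, inherited from the paper's own statement rather than introduced by you, is the boundary case $n=1$ of the second recurrence, where the color of $\pi_n=\pi_1$ is itself constrained by $c_1(\pi)=0$, so that term should contribute $g_{r,0}^+(q)$ rather than $r\,g_{r,0}^+(q)$; this does not affect the validity of your approach for $n\geq 2$.)
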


In order to find an explicit formula for $g_{r,n}(q)$, we rewrite
the above recurrences in terms of exponential generating functions.
Define
$$G_r(x,q)=\sum_{n\geq0}g_{r,n}(q)\frac{x^n}{n!},\quad
G^+_r(x,q)=\sum_{n\geq0}g_{r,n}^+(q)\frac{x^n}{n!},$$
$$G^-_r(x,q)=\sum_{n\geq0}g_{r,n}^-(q)\frac{x^n}{n!}.$$

By similar manipulations to the ones presented in the discussion
preceding Lemma \ref{gen fun pp}, we obtain:

\begin{theorem}
The generating functions $G_r(x,q)$, $G_r^+(x,q)$ and $G_r^-(x,q)$
are given by
$$G_r(x,q)=\frac{1-q}{(1-x)(1-q)-(r-1)+e^{(q-1)x}},$$
$$G_r^+(x,q)=\frac{(1-x)(1-q)}{(1-x)(1-q)-(r-1)+e^{(q-1)x}},$$
$$G_r^-(x,q)=\frac{x(1-q)}{(1-x)(1-q)-(r-1)+e^{(q-1)x}},$$
respectively.
\end{theorem}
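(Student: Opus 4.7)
The plan is to follow exactly the route used to prove Theorem \ref{gen fun pp}: convert the convolution recurrences of Lemma \ref{gl3} into a coupled system of first-order ODEs for the EGFs $G_r$, $G_r^+$, $G_r^-$, and then solve that system in closed form.

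First I would take the recurrence for $g_{r,n}(q)$, multiply it by $x^{n-1}/(n-1)!$, and sum over $n\geq 1$. The left side becomes $\frac{d}{dx}G_r(x,q)$, and the binomial-weighted convolutions collapse to EGF products: for example
\begin{equation*}
\sum_{n\geq 1}\sum_{j=1}^{n-1}\binom{n-1}{j-1}g_{r,j-1}(q)\,g_{r,n-j}(q)\,\frac{x^{n-1}}{(n-1)!}=G_r(x,q)\bigl(G_r(x,q)-1\bigr),
\end{equation*}
where the $-1$ comes from $g_{r,0}(q)=1$; the mixed convolutions against $g_{r,n-j}^-(q)+qg_{r,n-j}^+(q)$ are handled the same way, using $g_{r,0}^+(q)=1$ and $g_{r,0}^-(q)=0$. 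The same procedure applied to the $g_{r,n}^+$ recurrence produces a second ODE, and the relation $G_r=G_r^++G_r^-$ closes the system.

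Next I would solve the system, guided by the shape of the $B_n$ formulas in Theorem \ref{gen fun pp}. The natural ansatz is
\begin{equation*}
G_r(x,q)=\frac{1-q}{D(x,q)},\qquad G_r^+(x,q)=\frac{(1-x)(1-q)}{D(x,q)},\qquad G_r^-(x,q)=\frac{x(1-q)}{D(x,q)},
\end{equation*}
for a single unknown function $D(x,q)$ (note this ansatz automatically satisfies $G_r=G_r^++G_r^-$). Substituting into either ODE should reduce it to a linear first-order ODE of the form
\begin{equation*}
D'(x,q)+(1-q)\,D(x,q)=\beta(x,q),
\end{equation*}
with $\beta$ affine in $x$. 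This is integrated explicitly via the integrating factor $e^{(1-q)x}$, and the constant of integration is pinned down by $G_r(0,q)=1$, i.e.\ $D(0,q)=1-q$. The resulting $D(x,q)$ is the common denominator appearing in the theorem, and the three numerators $1-q$, $(1-x)(1-q)$, $x(1-q)$ are then read off from the ansatz.

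The main obstacle will be the low-order bookkeeping: the recurrence for $g_{r,n}^+$ does not hold verbatim at $n=1$ (since $\pi_1=\pi_n$ for a length-one word forces the color of $\pi_n$ to be $0$ rather than arbitrary), and the sums in Lemma \ref{gl3} start at different values of $j$ in different summands. Both subtleties produce boundary corrections that feed into the forcing term $\beta(x,q)$, and they must be tracked carefully to obtain the correct closed form, exactly as in the $B_n$ case. Once the ODE is set up correctly the integration itself is routine, and the final verification that the three claimed closed forms satisfy the system together with the initial conditions can be done directly, or, as the authors suggest for Theorem \ref{gen fun pp}, by substitution in a computer algebra system.
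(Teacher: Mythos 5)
Your overall strategy is exactly the paper's: turn the recurrences of Lemma \ref{gl3} into a coupled system of first-order ODEs for the exponential generating functions (the paper then hands the system to a computer algebra package, as it does for Theorem \ref{gen fun pp}), and you correctly flag the one genuine technical subtlety, namely that the recurrence for $g_{r,n}^+$ fails at $n=1$ (where $g_{r,1}^+=1$ rather than $rg_{r,0}^+=r$) and therefore contributes a constant correction $-(r-1)$ to the forcing term of the ODE for $G_r^+$.

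However, the step ``substituting the ansatz into either ODE reduces it to a linear ODE whose solution is the denominator appearing in the theorem'' does not go through for $r\geq 3$. Carrying out your own plan, the system obtained from Lemma \ref{gl3} (with the boundary correction above) is
\begin{align*}
\frac{d}{dx}G_r&=(1-q)G_r+rG_r^2-(1-q)G_rG_r^+,\\
\frac{d}{dx}G_r^+&=(1-q)G_r^+-(r-1)G_r+rG_rG_r^+-(1-q)(G_r^+)^2,
\end{align*}
with $G_r(0,q)=G_r^+(0,q)=1$. Writing $G_r=(1-q)/D$ and $G_r^+=N/D$, the two equations force $N'=-(r-1)(1-q)$, hence $N=(1-q)\bigl(1-(r-1)x\bigr)$, and then the linear ODE for $D$ integrates to $D=\bigl(1-(r-1)x\bigr)(1-q)-1+e^{(q-1)x}$. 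Your ansatz $N=(1-x)(1-q)$ is inconsistent with the second equation unless $r=2$, and the closed forms in the statement cannot be reached by any correct argument when $r\geq3$: at $x=0$ the stated denominator equals $3-r-q$, so the stated $G_r(0,q)=(1-q)/(3-r-q)\neq 1=g_{r,0}(q)$. By contrast, $\frac{1-q}{(1-(r-1)x)(1-q)-1+e^{(q-1)x}}$ does reproduce $g_{r,0}=1$, $g_{r,1}=r$, $g_{r,2}=2r^2-1+q$ and $g_{r,3}=6r^3-6r+1+(6r-2)q+q^2$, and it specializes at $r=2$ to the formula of Theorem \ref{gen fun pp}. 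So your method is the right one, but followed faithfully it lands on $G_r^+=\frac{(1-(r-1)x)(1-q)}{D}$ and $G_r^-=\frac{(r-1)x(1-q)}{D}$ with the $D$ above; the theorem as printed (and hence the coefficient extraction leading to Proposition \ref{ccdes}) agrees with this only for $r=2$, and your proposal, which hard-codes the printed numerators and denominator, cannot be completed as written.
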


In order to obtain an explicit formula for the number of colored
permutations in $G_{r,n}$ with exactly $m$ $(c,c)$-descents, we find
the coefficient of $x^nq^m$ in $G_r(x,q)$:
$$\begin{array}{ll}
G_r(x,q)&=\frac{1}{1-\left(x+\frac{e^{(q-1)x}+1-r}{q-1}\right)}\\
&=\sum\limits_{j\geq0}\left(x+\frac{e^{(q-1)x}+1-r}{q-1}\right)^j\\
&=\sum\limits_{j\geq0}\sum\limits_{i=0}^j\sum\limits_{k=0}^i\binom{j}{i}\binom{i}{k}\frac{x^{j-i}}{(q-1)^i}e^{(q-1)xk}(1-r)^{i-k}\\
&=\sum\limits_{j\geq0}\sum\limits_{i=0}^j\sum\limits_{k=0}^i\sumlim_{\ell\geq0}\binom{j}{i}\binom{i}{k}
\frac{x^{j-i+\ell}}{(q-1)^i\ell!}(q-1)^\ell k^\ell(1-r)^{i-k}.
\end{array}$$ Thus the coefficient of $x^nq^m$ in $G_r(x,q)$ is
given by
$$\sum\limits_{j=0}^{n-m}\sum\limits_{i=0}^j\sum\limits_{k=0}^i\binom{j}{i}\binom{i}{k}\binom{n-j}{m}\frac{(-1)^{n+m+j}(1-r)^{i-k}k^{n-j+i}}{(n-j+i)!}.$$
This implies that the number of colored permutations in $G_{r,n}$
with exactly $m$ $(c,c)$-descents is
$$n!\sum\limits_{j=0}^{n-m}\sum\limits_{i=0}^j\sum\limits_{k=0}^i\binom{j}{i}\binom{i}{k}\binom{n-j}{m}\frac{(-1)^{n+m+j}(1-r)^{i-k}k^{n-j+i}}{(n-j+i)!},$$
as stated in Proposition \ref{ccdes}.


\section*{Acknowledgements}
We would like to thank Ron Adin for some helpful advices.


\end{document}